\newtheorem{theorem}{Theorem}[section]
\newtheorem{lemma}[theorem]{Lemma}
\theoremstyle{definition}
\newtheorem*{Index Convention}{Index Convention}
\def\keywords#1{\par\medskip
\noindent\textbf{Keywords.} #1}
\def\subjclass#1{{\renewcommand{\thefootnote}{}
\footnote{\emph{Mathematics Subject Classification (2010):} #1}}}
\begin{document}
\let\le=\leqslant
\let\ge=\geqslant
\let\leq=\leqslant
\let\geq=\geqslant
\newcommand{\e}{\varepsilon }
\newcommand{\f}{\varphi }
\newcommand{ \g}{\gamma}
\newcommand{\F}{{\Bbb F}}
\newcommand{\N}{{\Bbb N}}
\newcommand{\Z}{{\Bbb Z}}
\newcommand{\Q}{{\Bbb Q}}
\newcommand{\C}{{\Bbb C}}
\newcommand{\R}{\Rightarrow }
\newcommand{\W}{\Omega }
\newcommand{\w}{\omega }
\newcommand{\s}{\sigma }
\newcommand{\hs}{\hskip0.2ex }
\newcommand{\ep}{\makebox[1em]{}\nobreak\hfill $\square$\vskip2ex }
\newcommand{\Lr}{\Leftrightarrow }
\sloppy

\title{Almost nilpotency of an  associative algebra with \\  an almost nilpotent fixed-point subalgebra}


\author{{\sc N.\,Yu.~Makarenko \\ \small Sobolev Institute of Mathematics, Novosibirsk,
630\,090,
Russia, \\[-1ex] \small natalia\_makarenko@yahoo.fr
}}

\author{
{N.\,Yu.~Makarenko\footnote{The research is supported by RSF (project N 14-21-00065)}}\\
\small  Sobolev Institute of Mathematics, Novosibirsk, 630\,090,
Russia
\\[-1ex] \small  natalia\_makarenko@yahoo.fr}

\date{}
\maketitle

\subjclass{Primary 16W20, 16W22, 16W50}



\begin{abstract}

 Let $A$ be an associative algebra of arbitrary dimension over a
field $F$ and   $G$ a finite group of automorphisms of $A$ of
order $n$, prime to the characteristic of $F$.  Denote by
 $A^G=\{ a\in A\,\mid \, a^g=a\,\, \mathrm{for\,\, all}\,\, g \in G\}$
 the fixed-point
 subalgebra.  By the classical Isaacs--Bergman theorem,
 if~$A^G$ is nilpotent of index $d$, i.e.
$(A^G)^d=0$, then $A$ is also nilpotent and its nilpotency index
is bounded by a function depending only on $n$ and $d$.  We prove,
under additional assumption of solubility of $G$, that if $A^G$
contains a two-sided nilpotent ideal $I\lhd A^G$ of nilpotency
index $d$ and of finite codimension $m$, then $A$ contains a
nilpotent two-sided ideal $H\lhd A$ of nilpotency index bounded by
a function of $n$ and $d$ and of finite codimension bounded by a
function of $m$, $n$ and~$d$. An even stronger result is provided
for graded associative algebras: if $G$ is a finite (not
necessarily soluble) group of order $n$ and
  $A= \bigoplus_{g\in G}A_g$ is a $G$-graded
 associative algebra over a field $F$, i.e. $A_gA_h\subset
A_{gh}$, such that  the identity component~$A_e$ has a two-sided
nilpotent ideal $I_e\lhd A^G$  of index $d$ and of finite
codimension $m$ in $A_e$, then $A$ has a homogeneous nilpotent
two-sided ideal $H\lhd A$ of index bounded by a function on $n$
and $d$ and of finite codimension bounded by a function on $n$,
$d$ and $m$.
\end{abstract}

\keywords{associative algebra,  actions of finite groups of
automorphisms, finite grading, graded associative algebra,
fixed-point subalgebra, almost nilpotency.}


 \section{Introduction}\label{intro}

By the classical Isaacs--Bergman theorem~\cite{be-is}, if an
associative algebra $A$  over a field $F$ admits a finite group of
automorphisms $G$ of order $n$, prime to the characteristic of
$F$, and the fixed-point subalgebra  $A^G=\{ a\in A\,\mid \,
a^g=a\,\, \mathrm{for\,\, all}\,\, g \in G\}$ is nilpotent of
index~$d$, i.e. $(A^G)^d=0$, then $A$ is nilpotent of index
bounded by a function of $n$ and $d$. Starting from this work, a
great number of paper was devoted to study of properties of
algebras (or rings) subject to corresponding properties of
fixed-point algebra under finite group actions. In this paper we
prove, under the additional assumption of the solubility of the
automorphism group, that the ``almost nilpotency'' of the
fixed-point subalgebra implies the ``almost nilpotency'' of the
algebra itself. Namely, the following theorem holds.

\begin{theorem} \label{th1} Let $A$ be an associative algebra of arbitrary
(possibly infinite)  dimension over a field $F$ acted on by a
finite soluble  group $G$ of order $n$. Suppose that the
characteristic of $F$ does not divide $n$.  If the fixed-point
subalgebra $A^G$ has a nilpotent two-sided ideal $I\lhd A^G$ of
nilpotency index $d$ and of finite codimension $m$ in $A^G$, then
$A$ has a nilpotent two-sided ideal $H\lhd A$ of nilpotency index
bounded by a function of $n$ and $d$ and of finite codimension
bounded by a function of $m$, $n$  and~$d$.
\end{theorem}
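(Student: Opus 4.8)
The plan is to reduce Theorem~\ref{th1} to the graded version announced in the abstract --- for an \emph{arbitrary} finite group $G$ of order $n$, a $G$-graded associative algebra whose identity component $A_e$ has a nilpotent two-sided ideal of index $d$ and codimension $m$ has a homogeneous nilpotent two-sided ideal of index bounded in terms of $n,d$ and codimension bounded in terms of $n,d,m$ --- and then to bootstrap from cyclic groups of prime order to all soluble $G$ by induction on $|G|$, the solubility being exactly what lets one replace a group \emph{action} by a \emph{grading}.

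\textbf{Step 1: induction on $|G|$.} I induct on $n=|G|$; for $n=1$ take $H=I$. For $n>1$, since $G$ is soluble and nontrivial there is a normal subgroup $N\lhd G$ with $G/N$ cyclic of prime order $p\mid n$ (pull back a subgroup of index $p$ in $G/[G,G]$). Then $A^N$ is an associative $F$-algebra, the group $C=G/N$ of order $p$ acts on it by automorphisms with $p$ invertible in $F$, and $(A^N)^{C}=A^{G}$ contains the nilpotent ideal $I$ of index $d$ and codimension $m$. Granting that $A^N$ itself possesses a nilpotent two-sided ideal of index bounded by a function of $n,d$ and of codimension bounded by a function of $n,d,m$ (Step 2), the induction hypothesis applied to the action of the soluble group $N$ (of order $|N|=n/p<n$) on $A$ yields the required $H\lhd A$, the bounds being obtained by composing the bound functions.

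\textbf{Step 2: the prime-cyclic case via diagonalization.} It remains to treat the $C$-action on $B:=A^{N}$, with $B^{C}$ containing a nilpotent ideal $I$ of index $d$ and codimension $m$. Enlarge the field to $K:=F(\omega)$ with $\omega$ a primitive $p$-th root of unity; then $[K:F]\le p-1$, and on $\bar B:=B\otimes_{F}K$ the generator $c$ of $C$ acts semisimply, giving the eigenspace decomposition $\bar B=\bigoplus_{i\in\Z/p}\bar B_{i}$ with $\bar B_{i}=\{x:x^{c}=\omega^{i}x\}$ --- a $\Z/p$-grading, since $c$ is an automorphism --- whose identity component $\bar B_{0}=\bar B^{C}=B^{C}\otimes_{F}K$ contains the nilpotent ideal $I\otimes_{F}K$ of index $d$ and codimension $m$. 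The graded theorem then provides a homogeneous nilpotent two-sided ideal $\bar H\lhd\bar B$ of index $\le\phi_{1}(p,d)$ and $K$-codimension $\le\phi_{2}(p,d,m)$. Setting $H_{N}:=\bar H\cap B$ (via $B\hookrightarrow\bar B$, $b\mapsto b\otimes 1$), this is a two-sided ideal of $A^{N}$, nilpotent of index $\le\phi_{1}(p,d)$, and the $F$-linear embedding $B/H_{N}\hookrightarrow\bar B/\bar H$ gives $F$-codimension $\le[K:F]\,\phi_{2}(p,d,m)\le(p-1)\,\phi_{2}(p,d,m)$ --- precisely the input needed for Step 1, with bounds controlled by $n,d,m$ since $p\mid n$.

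\textbf{The main obstacle.} Everything essential is concentrated in the graded theorem, and there the feature with no counterpart in the classical zero-codimension Isaacs--Bergman situation is the codimension bound. The nilpotency part alone --- a \emph{graded} Bergman--Isaacs statement: a $G$-graded algebra with nilpotent identity component is nilpotent of bounded index --- is elementary: in a product $a_{1}\cdots a_{L}$ of homogeneous elements, pigeonhole on the $L+1$ partial-product degrees $s_{j}=\deg(a_{1}\cdots a_{j})$ in the $n$-element group $G$ produces some value of $G$ occurring at $\ge(L+1)/n$ positions, and those positions split the product into many consecutive infixes all of degree $e$, hence all in $A_{e}$, so for $L\gtrsim dn$ the product lies in $A_{e}^{d}=0$. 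The difficulty in the ``almost'' setting is that an infix lying in $A_{e}$ and containing a factor from $I_{e}$ need not lie in $I_{e}$: conjugation by the non-identity homogeneous components can push $I_{e}$ out of itself, so the two-sided ideal of $A$ generated by $I_{e}$ is neither obviously nilpotent of bounded index nor of obviously controlled codimension (the off-identity components $A_{g}$ may be infinite-dimensional). Overcoming this --- by constructing the right homogeneous ideal containing $I_{e}$ while simultaneously bounding its nilpotency index in terms of $n,d$ and its codimension in terms of $n,d,m$ --- is the heart of the paper; the reduction in Steps 1--2 is then routine bookkeeping.
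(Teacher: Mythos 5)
Your reduction is correct, and it is organized differently from the paper's. The paper uses the grading trick (scalar extension by a primitive $p$-th root of unity, eigenspace decomposition, Theorem~\ref{th2}) only when $G$ itself has prime order; for composite $n$ it takes an arbitrary nontrivial proper normal subgroup $H\lhd G$, applies the induction hypothesis to $G/H$ acting on $A^{H}$, then invokes Lemma~\ref{l4-2} --- whose proof rests on the full Bergman--Isaacs theorem --- to enlarge the resulting ideal $J\lhd A^{H}$ to a $G$-invariant nilpotent ideal $K\lhd A$, and finally applies the induction hypothesis a second time, to $H$ acting on the quotient $A/K$ (using $\mathrm{char}\,F\nmid|H|$ to see that the fixed points of $H$ in $A/K$ are the image of $A^{H}$, hence of dimension at most $\mathrm{codim}\,J$). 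You instead choose $N\lhd G$ of prime index $p$, dispose of the cyclic top quotient $G/N$ acting on $A^{N}$ directly via Theorem~\ref{th2} (your descent from $B\otimes_F F(\omega)$ back to $B$ by intersecting the homogeneous ideal with $B$ is correct, and in fact more explicit than the paper's remark that one may pass to $A\otimes\Z[\omega]$), and then apply the induction hypothesis only once, to $N$ acting on the original algebra $A$ with the improved information about $A^{N}$. This buys a genuinely leaner argument: no passage to quotient algebras, no averaging over the subgroup, and no use of Lemma~\ref{l4-2} or the Bergman--Isaacs theorem at all --- everything is carried by Theorem~\ref{th2} plus the recursion, with the bound functions composing exactly as you indicate. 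What the paper's arrangement buys in exchange is flexibility in the choice of normal subgroup (any nontrivial proper $H$ works, no need for prime index) at the cost of the extra machinery. Your reliance on Theorem~\ref{th2} without proof is legitimate here, since the paper itself derives Theorem~\ref{th1} from Theorem~\ref{th2}, which is proved independently in \S\,3; your closing paragraph correctly identifies that theorem as the actual content of the paper.
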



The restrictions on  the order of the automorphism group are
unavoidable. There are examples showing that no results of this
kind are possible either for infinite automorphism groups  or for
algebras with $n$-torsion.

\vskip2ex

Theorem~\ref{th1} follows by induction on the order of $G$ from
the Bergman-Isaacs theorem and the following statement on graded
associative algebras, in which we do not suppose either $G$ to be
soluble or the order of $G$ to be prime to the characteristic of
the field.

\begin{theorem} \label{th2} Let $G$ be a finite group of order $n$ and
 let  $A= \bigoplus_{g\in G}A_g$ be a $G$-graded
 associative algebra over a field $F$, i.e. $A_gA_h\subset
A_{gh}$.  If the identity component $A_e$ has a nilpotent
two-sided ideal $I_e\lhd A_e$ of nilpotency index $d$ and  of
finite codimension $m$ in $A_e$, then $A$ has a homogeneous
nilpotent two-sided ideal $H\rhd A$ of nilpotency index bounded by
a function on $n$ and $d$ and of finite codimension bounded by a
function on $n$, $d$ and $m$.
\end{theorem}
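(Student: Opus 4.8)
The plan is to reduce the general case to the case where $A_e$ is finite-dimensional semisimple, and then to dispose of that case by graded ring theory. Write $G$ multiplicatively with identity $e$, so $n=|G|$; adjoining a unit we may assume $A$ is unital with $1\in A_e$ (this replaces $m$ by $m+1$ and changes nothing else). I will use repeatedly the \emph{graded analogue of the Bergman--Isaacs theorem}: if $B=\bigoplus_{g\in G}B_g$ is a $G$-graded associative algebra whose identity component $B_e$ is nilpotent of index $t$, then $B$ is nilpotent of index bounded in terms of $n$ and $t$, over a field of arbitrary characteristic. For $B_e=0$ this is elementary: for homogeneous $b_i\in B_{g_i}$ the product $b_1\cdots b_N$ already vanishes once two of the partial products $e,g_1,g_1g_2,\dots,g_1\cdots g_N$ coincide (the intervening factors then lie in $B_e=0$), so $B^{\,n}=0$; the general case follows from this by induction on $t$, or from the classical Bergman--Isaacs theorem by Cohen--Montgomery-type duality.

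\emph{Step 1: reduction to $A_e$ finite-dimensional semisimple.} Let $N\lhd A_e$ be the preimage in $A_e$ of the Jacobson radical of the finite-dimensional algebra $A_e/I_e$. Then $I_e\subseteq N$, the ideal $N$ is nilpotent of index bounded in terms of $m$ and $d$, the algebra $A_e/N$ is semisimple of dimension $\le m$, and $N$ is the largest nilpotent ideal of $A_e$ (a nilpotent ideal maps onto a nilpotent ideal of the semisimple $A_e/N$, hence lies in $N$). Let $\widehat N\lhd A$ be the two-sided ideal of $A$ generated by $N$; it is homogeneous, and comparing degrees gives $\widehat N\cap A_e=\sum_{g\in G}A_gNA_{g^{-1}}$. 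Each $A_gNA_{g^{-1}}$ is a two-sided ideal of $A_e$, and it is nilpotent because $(A_gNA_{g^{-1}})^k\subseteq A_gN^kA_{g^{-1}}$ (using $A_{g^{-1}}A_g\subseteq A_e$ and $N\lhd A_e$); hence $\widehat N\cap A_e$, a sum of $n$ nilpotent ideals of $A_e$, is nilpotent. By the graded Bergman--Isaacs theorem applied to the $G$-graded algebra $\widehat N$, the ideal $\widehat N$ is nilpotent of bounded index. Since $N$ is the largest nilpotent ideal of $A_e$, we get $\widehat N\cap A_e=N$, so $R:=A/\widehat N$ is $G$-graded with identity component $R_e=A_e/N$ finite-dimensional semisimple of dimension $\le m$. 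A homogeneous nilpotent ideal of finite codimension in $R$ pulls back to a homogeneous ideal of $A$ which is nilpotent — being an extension of a nilpotent algebra by the nilpotent $\widehat N$ — and of the same codimension; so it suffices to treat $R$. Replacing $A$ by $R$, we may assume $A_e$ is finite-dimensional semisimple of dimension $\le m$.

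\emph{Step 2: the semisimple case.} Let $H_0$ be the sum of all homogeneous ideals $J\lhd A$ with $J\cap A_e=0$. A sum of homogeneous ideals is homogeneous with identity component the sum of the identity components, so $H_0\cap A_e=0$; being a $G$-graded algebra with zero identity component, $H_0^{\,n}=0$. Put $\bar A:=A/H_0$; then $\bar A_e=A_e$ is finite-dimensional semisimple, and $\bar A$ is \emph{$e$-faithful}: no nonzero homogeneous ideal has zero identity component. In particular $\bar A$ is graded-semiprime (if $I\lhd\bar A$ is homogeneous with $I^2=0$ then $I_e^2=0$, so $I_e=0$ since $\bar A_e$ is semisimple, so $I=0$). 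Now a graded-semiprime $G$-graded algebra ($G$ finite) whose identity component is semisimple Artinian is graded-Artinian; its graded Jacobson radical is then a nilpotent homogeneous ideal, hence has zero identity component, hence vanishes by $e$-faithfulness; so $\bar A$ is graded-semisimple and graded-Artinian, and by the graded Wedderburn--Artin theorem it is a finite direct product of matrix algebras (with grading shifts) over graded-division algebras. Each graded-division factor has identity component a division ring inside $\bar A_e$, is supported on a subgroup of $G$, and has every homogeneous component one-dimensional over that division ring; routine bookkeeping with these constraints together with $\dim_F A_e\le m$ bounds the number of factors, the matrix sizes and the dimensions, so $\dim_F\bar A\le P(n,m)$ for an explicit polynomial $P$. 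Thus $H_0$ is a homogeneous nilpotent ideal of $A$ with $\dim_F(A/H_0)\le P(n,m)$, and together with Step~1 this proves the theorem, with nilpotency index bounded in terms of $n,d$ and codimension bounded in terms of $n,d,m$.

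\emph{The main obstacle.} The real content is the structural statement behind Step~2: an $e$-faithful $G$-graded algebra whose identity component is finite-dimensional semisimple is itself finite-dimensional, with dimension bounded explicitly in terms of $n$ and $\dim_F A_e$. This is where the graded ring theory is genuinely used — graded-Artinianity of graded-semiprime algebras with Artinian identity component, the graded Hopkins--Levitzki theorem, and the structure of graded-simple graded-Artinian algebras — and where the codimension bound is extracted. A secondary technical point, forced by the fact that the theorem imposes no condition relating $n$ to $\operatorname{char}F$, is to run the graded Bergman--Isaacs theorem in arbitrary characteristic, i.e. so that the argument never divides by $n$.
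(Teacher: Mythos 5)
There is a genuine gap, and it is fatal to the statement as written: your construction cannot give a nilpotency index bounded in terms of $n$ and $d$ alone. In Step~1 you replace $I_e$ by $N$, the preimage of the radical of $A_e/I_e$, and your final ideal contains the ideal $\widehat N$ generated by $N$. The index of $N$ is only $(m,d)$-bounded (roughly $md$), not $(n,d)$-bounded, and since $N\subseteq H$, the index of your $H$ inherits this dependence on $m$. Concretely, take $G$ trivial, $A=A_e=tF[t]/(t^{dm+1})$ and $I_e$ the image of $(t^{m+1})$: then $I_e$ has index about $d$ and codimension $m$, but your construction gives $N=\widehat N=A_e$ and $H_0=0$, so $H=A$, whose index is about $dm$. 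The theorem demands index bounded by a function of $n,d$ only (here a constant), so your closing claim ``nilpotency index bounded in terms of $n,d$'' is not what your argument yields. This is not a bookkeeping issue: any approach that first semisimplifies $A_e$ by swallowing the whole radical into the ideal loses exactly the feature that makes the theorem strong, and this feature is essential downstream --- in the paper's deduction of Theorem~1 the result is applied with $d=1$ and $m$ replaced by a previously obtained codimension $t$, and the argument only works because the new index depends on $|H|$ alone and not on $t$. The paper avoids this by never touching the radical of $A_e$: its ideal is $Z={}_{\rm id}\langle A_{g_1}(N),\dots,A_{g_{n-1}}(N),I_e\rangle$, which contains $I_e$ but only small, carefully chosen pieces of the nonidentity components, so the $m$-dependence is pushed entirely into the codimension.

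A second, independent gap is that the crux of Step~2 is asserted rather than proved. The statement you need --- an $e$-faithful (or graded-semiprime) $G$-graded algebra whose identity component is finite-dimensional semisimple is graded-Artinian, hence finite-dimensional with dimension bounded by an explicit function of $n$ and $m$ --- is not an off-the-shelf theorem in the generality you need (possibly infinite-dimensional $A$, arbitrary characteristic, and a uniform bound rather than mere finiteness; the graded Hopkins--Levitzki and graded Wedderburn theorems you invoke only apply after graded-Artinianity is established, which is precisely the point at issue). You flag this yourself as ``the main obstacle,'' but it is exactly the hard combinatorial content: controlling a homogeneous component $A_g$ through all the ways of completing its elements to degree $e$ by multiplying on both sides. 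The paper's machinery of generalized centralizers (the maps $\vartheta_{\vec z,l}\colon A_g\to A_e/I_e$ and the levels $A_g(1)\supseteq\cdots\supseteq A_g(N)$) is built to do precisely this, elementarily and directly with $I_e$, which is also what lets it keep the index $(n,d)$-bounded. So as it stands the proposal is a plan that, even if its Step~2 were fully justified, would prove only the weaker version of Theorem~\ref{th2} with the nilpotency index depending on $m$ as well.
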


The proof of Theorem~\ref{th2} is based on  the method of
generalized centralizers, originally  created by Khukhro in
\cite{kh1} for nilpotent groups and Lie algebras with an almost
regular automorphism of prime order. In \cite{ma-kh3, ma-kh4} the
approach was significantly revised and new techniques were
introduced to study a more complicated case of an almost regular
automorphism of arbitrary (not necessarily prime) finite order. In
particular, it was proved that if a Lie algebra $L$ admits an
automorphism $\varphi$ of finite order $n$ with finite-dimensional
fixed-point subalgebra of dimension $\rm {dim}\,C_L(\varphi)=m$,
then $L$ has a soluble ideal of derived length bounded by a
function of $n$ whose codimension is bounded by a function of $ m$
and~$n$.
  The fundamental combinatorial
nature of the construction  in~\cite{ma-kh4} makes possible to
apply it to a wide range of situations. For example, the approach
was used to study  Lie type algebras (a large class of algebras
which includes associative, Lie algebras, color Lie superalgebras)
with an almost regular automorphism of finite order in~\cite{ma3}.

In the proof of Theorem \ref{th2} we  use virtually the same
construction as in~\cite{ma-kh4}. But the strong condition of
associativity simplifies the reasoning and allows to provide much
stronger results than in the case of Lie algebras. In particular,
we do not need to suppose that the automorphism group is cyclic.

We give some definitions and auxiliary  lemmas  in~\S\,2.
 In~\S\,3 we prove Theorem~\ref{th2}. For this, we set $N=d^2+1$ and for each $g\in G \setminus {e}$ we construct by induction
generalized centralizers $A_g(i)$ of levels $i=1, 2, \ldots N$,
which  are some subspaces of the homogeneous components $A_g$.
Then we demonstrate that the ideal generated by all the $A_g(N)$,
$g\in G \setminus {e}$ is the required one. In~\S\,4 by induction
on the order of $G$ we derive the Theorem~\ref{th1} from
Theorem~\ref{th2} and the Bergman-Isaacs Theorem.

Throughout the paper we will say that a number is   ``$(a, b,
\ldots )$- bounded'' if it is  ``bounded above by some function
depending only on $a, b, \ldots$''.

\section {Preliminaries}

If $G$ is a group of automorphisms of $A$, then $A^G=\{ a\in
A\,\mid \, a^g=a\,\, \mathrm{for\,\, all}\,\, g \in G\}$ will
denote the fixed-point subalgebra. The two-sided ideal $H$ of $A$
is denoted by $H\lhd A$. If $I$ and $J$ are subspaces of $A$, $IJ$
will denote the subspace spanned by all products $ab$ with $a \in
I$ and $b \in J$, and $I^d$ will denote the $d$-fold product
$\underbrace{I\ldots I}_{d}$.
 We say
that an algebra is nilpotent of (nilpotency) index $d$ if the
product of any $d$ elements of the ring $A$ equals zero, i.e. $A^d
= 0$. The  subalgebra  generated by subspaces~$B_1,B_2,\ldots,
B_s$ is denoted by $\left<B_1,B_2,\ldots, B_s\right>$, and the
two-sided ideal generated by~$B_1,B_2,\ldots, B_s$ is denoted by
${}_{\rm id}\!\left<B_1,B_2,\ldots, B_s\right>$.
 If $H$ is an algebra, then $H^{\#}$ will denote the algebra obtained by adjoining
 $1$ to $H$. The (two-sided) ideal of $H$ generated by a subspace $I$ is sometimes written as $H^{\#}IH^{\#}$.

 \vskip2ex
We now state some facts needed in the proof.

\begin{lemma}\label{l-b-i} (Bergman--Isaacs Theorem \cite{be-is}). Let $G$ be a
finite group of automorphisms of an associative ring (algebra) $R$
of order $n$. If $R$ has no $n$-torsion and $R^G$ is nilpotent  of
index $d$ then $R$ is nilpotent of index at most $h^d,$ where $h=
1+\prod_{i=0}^n(C_n^i+1)$.
\end{lemma}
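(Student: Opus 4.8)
\emph{Proof proposal.} This is the Bergman--Isaacs theorem, so I would follow~\cite{be-is}; here is the shape of the argument. The whole statement reduces to the fixed-point-free case: \emph{if $R^{G}=0$ and $R$ has no $n$-torsion, then $R^{h}=0$ with $h=1+\prod_{i=0}^{n}(C_{n}^{i}+1)$}. Granting this, the case $(R^{G})^{d}=0$ follows by induction on $d$: one passes to the $G$-invariant ideal $T={}_{\rm id}\!\left<R^{G}\right>$ and checks that $(R/T)^{G}=0$ --- if $r^{g}\equiv r\ (\mathrm{mod}\ T)$ for all $g$ then $nr\equiv\sum_{g}r^{g}\in R^{G}\subset T$, and the $n$-torsion-free hypothesis (after replacing $R$ by $R\otimes\mathbb Z[1/n]$, into which $R$ embeds, so that the trace becomes a genuine projection) upgrades this to $r\in T$. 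Hence $R^{h}\subset T$ by the fixed-point-free case, and the ideal $T$ is then controlled by running the same machine with the strictly smaller nilpotency index of $R^{G}$ carried as a parameter; multiplying the $d$ resulting estimates gives $h^{d}$.

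For the fixed-point-free case the central object is the trace $t\colon R\to R^{G}$, $t(r)=\sum_{g\in G}r^{g}$; since $R^{G}=0$ we have $t\equiv 0$, and hence $\sum_{g\in G}r_{1}^{g}r_{2}^{g}\cdots r_{k}^{g}=\sum_{g\in G}(r_{1}r_{2}\cdots r_{k})^{g}=0$ for all $r_{i}\in R$ and all $k$. I would exploit this through a combinatorial argument on ``$G$-patterned'' products: for fixed $a_{1},\dots,a_{k}$ and a set partition $P$ of $\{1,\dots,k\}$, put $S_{P}=\sum a_{1}^{g_{1}}\cdots a_{k}^{g_{k}}$, the sum over all tuples $(g_{1},\dots,g_{k})\in G^{k}$ that are constant on every block of $P$. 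For the discrete partition this factors as $t(a_{1})\cdots t(a_{k})=0$, and for the one-block partition it equals $t(a_{1}\cdots a_{k})=0$; expanding a block of a general $P$ into the ways it can ``collapse'' produces inclusion--exclusion relations among the $S_{P}$ whose coefficients are products of binomials $C_{n}^{i}$. Since $R$ has no $n$-torsion these coefficients may be divided out, and iterating the relations lets one express any sufficiently long untwisted product $a_{1}\cdots a_{k}$ as a $\mathbb Z[1/n]$-combination of patterned sums attached to strictly coarser partitions or to shorter lists of elements, all of which vanish by induction once $k$ exceeds the number of partitions and collapses that can occur. Stratifying that count over the $n+1$ possible block sizes gives exactly the constant $h$.

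The main obstacle is precisely this fixed-point-free combinatorial core: converting the single linear identity $t\equiv 0$ into an honest nilpotency bound requires organizing the inclusion--exclusion over set partitions correctly and verifying that each collapse contributes a multiplicity dividing some $C_{n}^{i}$, so that the $n$-torsion-free hypothesis can be invoked --- this hypothesis, and the explicit value of $h$, are artifacts of exactly that step. The induction on $d$ and the passage to $\mathbb Z[1/n]$ are comparatively routine once the core is in place.
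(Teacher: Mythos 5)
The paper offers no proof of this lemma at all: it is quoted from Bergman--Isaacs \cite{be-is} and used as a black box, so there is no internal argument to compare yours against, and for the purposes of this paper a citation suffices. Judged on its own terms, your outline follows the general shape of the original argument (trace map, reduction to the fixed-point-free case, induction on $d$), but it does not amount to a proof.

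The concrete gap is in the inductive step. You pass to $T={}_{\rm id}\!\left<R^G\right>$, obtain $R^h\subseteq T$ from the fixed-point-free case (this part is fine after inverting $n$: since $R$ has no $n$-torsion, $R$ embeds in $R\otimes_{\Z}\Z[1/n]$, fixed rings commute with this flat base change, and quotients by $G$-invariant ideals then have fixed rings equal to the images of fixed rings), and then assert that $T$ ``is controlled by running the same machine with the strictly smaller nilpotency index of $R^G$ carried as a parameter''. But $R^G\subseteq T$, so $T^G=T\cap R^G=R^G$, whose nilpotency index is still $d$: the parameter does not decrease, and the induction as written is circular. The actual Bergman--Isaacs deduction of the $h^d$ bound is arranged differently, via their theorem that $n^{k}R^{m}$ lies in the ideal generated by the traces for universal $m,k$ depending on $n$, applied with careful torsion bookkeeping to suitable quotients; getting the multiplicative bound out of this is not a one-line ``multiply the estimates'' step. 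In addition, the combinatorial core --- the only place where the no-$n$-torsion hypothesis and the constant $h=1+\prod_{i=0}^n(C_n^i+1)$ actually arise --- is described only impressionistically (inclusion--exclusion over set partitions with binomial coefficients), and you yourself flag it as the main obstacle rather than proving it. So the proposal does not establish the stated bound; either supply the full combinatorial argument of \cite{be-is} or, as the paper does, simply cite it.
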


The following two lemmas are known. We give their proofs for the
convenience of readers.

\begin{lemma} \label{l-b-i-2} (Bergman--Isaacs \cite[lemma 1.1]{be-is}). Let $G$ be a finite group of order $n$ and
 let  $A= \bigoplus_{g\in G}A_g$ be a $G$-graded
 associative algebra over a field $F$, i.e. $A_gA_h\subset
A_{gh}$.  If the identity component $A_e$ is nilpotent of index
$d$, then $A$ is nilpotent of  index at most $nd$.
\end{lemma}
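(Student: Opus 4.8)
The plan is to reduce the claim to products of homogeneous elements and then run a pigeonhole argument on the sequence of prefix products. First I would note that, since $A=\bigoplus_{g\in G}A_g$ as $F$-modules, an arbitrary product of $nd$ elements of $A$ is an $F$-linear combination of products $a_1a_2\cdots a_{nd}$ in which each $a_i$ is homogeneous, say $a_i\in A_{g_i}$ with $g_i\in G$. Hence it suffices to prove that every such homogeneous product is zero.

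To a homogeneous product $a_1a_2\cdots a_{nd}$ I attach the prefix group elements $p_0=e$ and $p_k=g_1g_2\cdots g_k$ for $k=1,\ldots,nd$, so that each partial product $a_1\cdots a_k$ lies in $A_{p_k}$. This gives $nd+1$ elements of $G$, and since $|G|=n$ while $nd+1>nd$, the pigeonhole principle produces some $h\in G$ with $p_{k_0}=p_{k_1}=\cdots=p_{k_d}=h$ for $d+1$ indices $0\le k_0<k_1<\cdots<k_d\le nd$. Setting $c_j=a_{k_{j-1}+1}a_{k_{j-1}+2}\cdots a_{k_j}$ for $j=1,\ldots,d$, each $c_j$ is homogeneous of degree $p_{k_{j-1}}^{-1}p_{k_j}=h^{-1}h=e$, i.e. $c_j\in A_e$. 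Therefore $c_1c_2\cdots c_d\in (A_e)^d=0$ by the hypothesis that $A_e$ is nilpotent of index $d$, and since
$$a_1a_2\cdots a_{nd}=(a_1\cdots a_{k_0})\,(c_1c_2\cdots c_d)\,(a_{k_d+1}\cdots a_{nd})$$
(with the prefix or suffix possibly empty), the whole product vanishes. Thus $A^{nd}=0$.

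There is no genuine obstacle here: the two points needing a little care are the passage to homogeneous factors, which is legitimate precisely because the grading is an $F$-module decomposition, and the arithmetic of the pigeonhole step, where the exponent $nd$ is chosen so that among the $nd+1$ prefixes one value is attained at least $d+1$ times, yielding $d$ consecutive blocks each landing in the identity component. The same computation gives the explicit bound $nd$ on the nilpotency index in terms of $n$ and $d$.
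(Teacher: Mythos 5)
Your proof is correct and is essentially the same as the paper's: both reduce to homogeneous products of length $nd$, apply the pigeonhole principle to the $nd+1$ prefix products to find $d+1$ equal values, and conclude that the $d$ intermediate blocks lie in $A_e$, so the product vanishes since $(A_e)^d=0$. Nothing to add.
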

\begin{proof} It suffices to prove that a product $a_{g_1}a_{g_2}\ldots a_{g_{nd}}$ in homogeneous
elements $a_{g_i}\in A_{g_i}$, $i=1,\ldots, nd$ of length $nd$ is
trivial. We consider  $nd+1$ products $h_0=e$, $h_1=g_1$, $h_i =
g_1\ldots g_i$, $i=1,\ldots, nd$. Since the order of $G$ is $n$,
some $d+1$ elements  must be equal. If $h_i=h_j$ with $i <j$, then
$g_{i+1}\ldots g_j=e$. We obtain that $a_{g_1}a_{g_2}\ldots
a_{g_{nd}}$ can be represented as $P_1Q_1Q_2\ldots Q_d P_2$, where
$P_1$ and $P_2$ are (possibly empty) products in homogeneous
$a_{g_i}$, and each $Q_i$ is a non-empty product of the form $Q_i=
 a_{g_{i+1}}\ldots a_{g_{j}}$ with $g_{i+1}\ldots g_j=e$. It follows that $Q_i\in A_e$ for all $i=1,\ldots,d$. Since $(A_e)^d=0$,
 we have that $Q_1Q_2\ldots Q_d=0$, and therefore $a_{g_1}a_{g_2}\ldots
a_{g_{nd}}=0$.
\end{proof}

\begin{lemma} [{\cite[Lemma
1.3.7 ]{Kharchenko}}]\label{l4-2} Let $A$ be an associative
algebra over a field $F$ acted on by a finite group $G$ of order
$n$. Suppose that the characteristic of $F$ does not divide $n$.
If the subalgebra of invariants $A^G$ contains a nilpotent ideal
$I\lhd A^G$ of nilpotency index $d$, then $A$ contains a
$G$-invariant ideal $J\geq I$   of $(n,d)$-bounded nilpotency
index.
\end{lemma}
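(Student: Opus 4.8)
The plan is to prove the apparently stronger statement that the two-sided ideal $J:=A^{\#}IA^{\#}$ of $A$ generated by $I$ — which is automatically $G$-invariant, since $I\subseteq A^{G}$ forces $I^{g}=I$ for every $g\in G$, and which obviously contains $I$ — is itself nilpotent of $(n,d)$-bounded index. (Any $G$-invariant ideal containing $I$ contains $J$, so this is equivalent to the assertion of the lemma.) Because $\mathrm{char}\,F\nmid n$, the algebra $A$, hence also $J$, has no $n$-torsion, so by the Bergman--Isaacs theorem (Lemma~\ref{l-b-i}) applied to $J$ with the induced $G$-action it suffices to bound the nilpotency index of $J^{G}=J\cap A^{G}$ by a function of $n$ and $d$.

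I would first reduce to the case $d=2$ by induction on $d$. Pass to the quotient $\bar A=A/A^{\#}I^{d-1}A^{\#}$; this is a quotient by a $G$-invariant ideal, so $G$ acts on $\bar A$, and $\bar A^{G}=\overline{A^{G}}$ because $\mathrm{char}\,F\nmid n$ (average a preimage with the Reynolds operator). The image of $I$ in $\bar A^{G}$ is nilpotent of index $\le d-1$, so by the induction hypothesis $\bar A$ has a $G$-invariant nilpotent ideal of $(n,d-1)$-bounded index containing it; lifting, one gets $J^{e}\subseteq A^{\#}I^{d-1}A^{\#}$ for some $(n,d-1)$-bounded $e$. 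Since $(I^{d-1})^{2}=I^{2(d-1)}\subseteq I^{d}=0$, the $d=2$ case applied to the ideal $I^{d-1}\lhd A^{G}$ gives that $A^{\#}I^{d-1}A^{\#}$ is nilpotent of $(n,2)$-bounded index; combining the two bounds shows $J$ is nilpotent of $(n,d)$-bounded index.

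So the substance is the case $I^{2}=0$. Here I would use the Reynolds operator $\pi=\tfrac1n\sum_{g\in G}(\cdot)^{g}\colon A\to A^{G}$, which is an $A^{G}$-bimodule projection with $J^{G}=\pi(J)$, together with the Maschke decomposition $A=A^{G}\oplus A_{0}$, $A_{0}=\ker\pi$. From $A^{G}I,\,IA^{G}\subseteq I$ and $\pi(A_{0}I)=\pi(IA_{0})=0$ one gets $A_{0}I,\,IA_{0}\subseteq A_{0}$; expanding $J=I+IA+AI+AIA$ through this decomposition and applying $\pi$ collapses everything to $J^{G}=I+V$, where $V:=\pi(A_{0}IA_{0})$, and the same bimodule identities give $V\lhd A^{G}$ and $IV+VI+V^{2}\subseteq V$. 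It then remains to show that $V$ is nilpotent of $(n)$-bounded index: granting this, a short combinatorial estimate (any $I$-factor adjacent to a $V$-factor is absorbed, and $I^{2}=0$) yields $(I+V)^{2k}\subseteq V^{k}=0$, and Bergman--Isaacs finishes. To bound the powers of $V$ one writes a product $v_{1}\cdots v_{k}$ of elements $v_{j}=\pi(b_{j}i_{j}c_{j})$ ($b_{j},c_{j}\in A_{0}$, $i_{j}\in I$) as $n^{-k}\sum_{\vec g}b_{1}^{g_{1}}i_{1}(c_{1}^{g_{1}}b_{2}^{g_{2}})i_{2}\cdots i_{k}c_{k}^{g_{k}}$, splits each ``bridge'' $c_{l}^{g_{l}}b_{l+1}^{g_{l+1}}\in A_{0}A_{0}$ into its $A^{G}$-component (which merges two consecutive copies of $I$, so that merging enough of them kills the word since $I^{2}=0$) and its $A_{0}$-component (which sprouts a new $A_{0}$-factor), and then uses $|G|=n$ to bound how long such words can survive before $\pi$ of the whole sum is forced to vanish.

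The main obstacle is precisely this last bound on the powers of $V$. The subspace $V=\pi(A_{0}IA_{0})$ records the part of the ``doubled'' ideal that becomes $G$-invariant only after averaging, and controlling its powers is exactly where one must pin down the interaction between the (in general non-closed) multiplication $A_{0}\times A_{0}\to A^{G}\oplus A_{0}$, the ideal $I$, and the order $n$; everything surrounding it — the two reductions above and the final combinatorial passage from ``$V$ nilpotent'' back through Bergman--Isaacs to ``$J$ nilpotent of $(n,d)$-bounded index'' — is formal.
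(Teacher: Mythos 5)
Your reduction machinery (the induction on $d$, the identity $J^G=\pi(J)=I+V$ with $V=\pi(A_0IA_0)$, and the absorption relations $IV+VI+V^2\subseteq V$) is formally correct, but the proof has a genuine gap exactly where you acknowledge it: you never establish that $V$ is nilpotent of $n$-bounded index. The sketched argument --- expanding $v_1\cdots v_k$ into averaged words, splitting each bridge $c_l^{g_l}b_{l+1}^{g_{l+1}}$ into $A^G$- and $A_0$-components, and then ``using $|G|=n$ to bound how long such words survive'' --- is not an argument but a restatement of the difficulty: there is no mechanism given by which the order of $G$ alone forces $\pi$ of the surviving sum to vanish, and carrying this out would amount to redoing a Bergman--Isaacs-type combinatorial analysis from scratch. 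The root of the trouble is the choice to apply Bergman--Isaacs to the two-sided ideal $J=A^{\#}IA^{\#}$, whose fixed subalgebra $J^G$ is genuinely larger than $I$ and not directly controllable.

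The paper sidesteps this entirely by applying Bergman--Isaacs to the \emph{one-sided} ideal $B=IA^{\#}$. This $B$ is $G$-invariant (as $I\subseteq A^G$), and the averaging trick available because $\mathrm{char}\,F\nmid n$ shows $B^G\subseteq I$: if $b\in B^G$, write $b=ns$ with $s=\sum_m i_m a_m$, $i_m\in I$, $a_m\in A^{\#}$; then $b=\sum_{g}s^g=\sum_m i_m\bigl(\sum_g a_m^g\bigr)\in I$, since $\sum_g a_m^g$ is invariant and $I\lhd A^G$. The point is that the left factors $i_m$ are already fixed, so averaging lands everything back in $I$; hence $(B^G)^d\subseteq I^d=0$ and Bergman--Isaacs applies directly to $B$, giving $B^{h^d}=0$ with $h=1+\prod_{i=0}^n(C_n^i+1)$. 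Finally $(A^{\#}IA^{\#})^{h^d}=A^{\#}(IA^{\#})^{h^d}=0$, so the two-sided $G$-invariant ideal $J\geq I$ is itself nilpotent of $(n,d)$-bounded index, with no need for your reduction to $d=2$ or any control of $V$. If you want to salvage your route, you would have to supply a complete proof of the bound on powers of $V$; replacing the two-sided ideal by $IA^{\#}$ is the decisive simplification you are missing.
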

\begin{proof} Consider the right-sided ideal $B=IA^{\#}$
generated by $I$ (recall that  $A^{\#}$ is the algebra obtained
from $A$ by joining the unit). Let $b\in B^G$ be an element of $B$
fixed by~$G$, i. e. $b^g=b$ for all $g\in G$. There exist elements
$s\in B$, $i_m\in I$, $a_m\in A^{\#}$ such that $b=ns=n\sum_m i_m
a_m$. Since $\sum_{ g\in G} a_m^g\in A^G$ and $I$ is an ideal in
$A^G$, we have
$$b=ns=\sum_{ g\in G}s^g=\sum_m i_m \sum_{ g\in G} a_m^g\in I,$$
i. e. $B^G\leq I$ and $B^G$ is nilpotent of index $\leq d$.
Applying Bergman--Isaacs Theorem to the algebra $B$ we obtain that
$B$ is nilpotent of index at most $h^d,$ where $h=
1+\prod_{i=0}^n(C_n^i+1)$.  Finally, two-sided $G$-invariant ideal
$J\geq I$ generated  by $B$ is also nilpotent of index at most
$h^d$: $(A^{\#}IA^{\#})^{h^d}=A^{\#}(IA^{\#})^{h^d}=0.$
\end{proof}

\section {Proof of Theorem \ref{th2}}

Let $G$ be an arbitrary finite group of order $n$ and
  $A= \bigoplus_{g\in G}A_g$ be a $G$-graded
 associative algebra over a field $F$, i.e. $A_gA_h\subset
A_{gh}$.  Suppose that  the identity component $A_e$ has a
nilpotent ideal $I_e$ of nilpotency index $d$  and $\mathrm{dim}\,
A_e/I_e=m$.

\vskip2ex

{\bf Index Convention.}  In what follows, unless otherwise stated,
a small letter with an index $g$ will denote an element of the
homogeneous component $A_g$. The index  only indicates which
component  this element belongs to: $x_g\in A_g$. To lighten the
notation, we shall not be using numbering indices for elements of
the $A_g$, so that different elements can be denoted by the same
symbol. For example, $x_{g}$ and $x_{g}$ can be different elements
of $A_{g}$.

\vskip2ex

{\bf Construction of generalized centralizers and
representatives.}  We fix $N=d^2+3$. In each homogeneous component
$A_g$, $g\in G\setminus \{e\}$ we construct by induction a
descending chain of subspaces:
$$A_g=A_g(0)\geq A_g(1)\geq \cdots \geq A_g(N).$$
The subspaces $A_g(s)$ are called generalized centralizers of
level $s$. Simultaneously we fixe some homogeneous elements in
$A_g(s)$, $s=0,\ldots, N$ which are referred to as representatives
of level $s$. The total number of representatives will be $(n, d,
m)$-bounded.

\vskip2ex

{\bf Definition.}   For a monomial $a_{g_1}a_{g_2}\ldots a_{g_k},$
where $a_{g_i}\in A_{g_i}$, the record $(\ast_{g_1}
\ast_{g_2}\cdots \,\ast_{g_k})$ is called the {\it pattern\/} of
the monomial. The {\it length} of a pattern is the degree of the
monomial. The monomial is said to be the {\it value of its
pattern} on the given elements.

 For example, $a_{g}a_g a_v$ and
$b_{g}c_g b_v$ are values of the same  pattern $(\ast_{g}\,
\ast_{g}\,\ast_{v})$. (Under the Index Convention the elements
$a_{g}$ in the first product can be different.) \vskip2ex

 {\bf Definition.}
 Let $g\in G\setminus\{e\}$. For every ordered tuple
of elements $\vec x=(x_{g_1},\ldots ,x_{g_k}),$
 $x_{g_s}\in A_{g_s}$,  such that
$g_1g_2\ldots g_{l-1}\,g\, g_l\ldots g_k=e$ for some
$l\in\{1,\ldots k+1\}$ we define the mappings:
$$\vartheta _{\vec x,l}:\, \, A_g\rightarrow  A_e/I_e;$$
$$\vartheta _{\vec x,l}:\, \, y_g\rightarrow  x_{g_1} x_{g_2}\ldots x_{g_{l-1}} y_g x_{g_l} \ldots
x_{g_k}I_e,$$  where  $I_e$ is the nilpotent ideal of $A_e$ of
nilpotency index $d$ and of codimension $m$ in~$A_e$. We use index
$l$ to distinguish eventual  cases of $g_1g_2\ldots g_{k-1}\,g\,
g_k\ldots g_k=e$ and $g_1g_2\ldots g_{l-1}\,g\, g_l\ldots g_k=e$
with $k\neq l$ which lead to different mappings.

By linearity,  the mapping $\vartheta _{\vec x,l}$ is a
homomorphism of the subspace $A_g$ into factor-space $A_e/I_e$.
Since  ${\rm dim}\, A_e/I_e\leq m$, we have ${\rm dim}\,
A_g/\mbox{Ker}\, \vartheta _{\vec x,l}\leq m$. \vskip2ex

{\bf Definition of level  0.} We set $A_g(0)=A_g$ for all $g\in
G\setminus \{e\}$.  To construct representatives of level $0$ we
fix some elements $x_e\in A_e$ whose images form a basis of
$A_e/I_e$. These elements are  called {\it representatives of
level}
 $0$ and are denoted  by $x_{e}(0)$ (under the Index Convention).
 In addition we
consider  a pattern ${\bf P}= (\ast_g\, \ast_{g^{-1}})$ of length
$2$ with $g\in G\setminus \{e\}$. The dimension of the subspace of
the factor-space $A_e/I_e$ spanned by all images of values
 of ${\bf P}$ on  homogeneous elements of~$A_{g}, \, A_{g^{-1}}$ is at most $m$ by hypothesis.
 Hence we can
  choose at most $m$  products $c=x_gx_{g^{-1}}\in A_e$ whose
  images
 form a basis of this subspace.  The elements $x_g, x_{g^{-1}}$
  involved in these representations of the elements $c$ are also called  {\it representatives of level}
 $0$ and are denoted  by $x_{g}(0),\, x_{g^{-1}}(0)$ (under the Index Convention).
 The same is done for every pattern ${\bf
 P}$  of the form $(\ast_g\,
\ast_{g^{-1}})$, $g\in G\setminus \{e\}$.

Since ${\rm dim}\, A_e/I_e\leq m$  and the total number of
patterns ${\bf P}$ is $n-1$, the number of representatives of
level $0$ is at most
 $2(n-1)m+m$.

 \vskip2ex

{\bf Definition of level $\pmb{1}$.} Let $W_1=2d^3(n-1)+2$.  For
each  $g\in G \setminus \{e\}$ we set
$$A_g(1)=\bigcap_{\vec z} \bigcap_{l}\,\mbox{Ker}\, \vartheta_{\vec z,l},$$
where $\vec z=\left( z_{g_1}(0),\, \ldots ,\, z_{g_k}(0)\right) $
runs over all possible ordered tuples of all lengths
 $k\leq W_1$ consisting of representatives of  level $0$
  such that
 $ g_1\ldots g \ldots g_k=e$; if for a fixed tuple
 $\vec z=\left( z_{g_1}(0),\, \ldots ,\, z_{g_k}(0)\right) $ of length $k$ there are several different integers $l\leq k+1$  such that
 $ g_1\ldots g_{l-1}g g_{l}\ldots g_k=e$, we take the intersection over all such  integers $l$.
 The subspaces $A_g(1)$ are referred to as the  {\it generalized centralizers of level
 $1$}, elements of the  $A_g(1)$ are called {\it centralizers of
level $ 1$} and
 are denoted  by
 $y_g(1)$ (under the Index Convention).

The subspace $A_g(1)$  has $(n,d,m)$-bounded codimension
in~$A_{g}$ since the intersection here is taken over an $ (n,d,m)
$-bounded number of subspaces of $m$-bounded codimension in $A_g$.

The representatives of level~$1$ are constructed in two different
ways. First, for each
 $g\in G \setminus \{e\}$ we fix some elements of $A_g$ whose images form a
basis of of the factor-space
 $A_g/A_g(1)$. These elements  are called
{\it $b$-representatives of level $1$} are denoted by
 $ b_g(1)\in A_g$ (under the Index Convention). Since the dimensions  $A_g/A_g(1)$ are $(n,d,m)$-bounded for all
$g \in G\setminus \{e\}$,
  the total number of
 $b$-representatives of level $1$ is
 $(n,d,m)$-bounded.

Second, for each pattern ${\bf P}=(\ast_g \, \ast_{g^{-1}})$ of
length $2$ with indices  $g, g^{-1}\in G\setminus \{e\}$ we
consider the subspace of the factor-space $A_e/I_e$ spanned by all
images of  the values of ${\bf P}$ on homogeneous elements  of
$A_{g}(1),\, A_{g^{-1}}(1)$. Since ${\rm dim}\, A_e/I_e\leq m$, we
can choose at most $m$ products $c=y_g(1) y_{g^{-1}}(1)\in A_e$
whose images form a basis of this subspace in  $A_e/I_e$ and fix
the elements $y_g(1),\,y_{g^{-1}}(1)$
  involved in these representations. These elements
   are called {\it $x$-representatives of level $1$} and are
denoted by $ x_{g}(1)$ (under the Index Condition). Since the
number of patterns under consideration is equal to $n-1$, the
total number of $x$-representatives of level  $1$ is at most
$2(n-1)m$.

By construction, if $ g_1\ldots g_{t-1}\,g\, g_{t} \ldots g_k=e$,
for some $t\leq k+1$ and   $k\leq W_1$, a centralizer  $y_g(1)$
has the following property with respect to representatives
$x_{g_j}(0)$
 of level $0$:
\begin{equation}x_{g_1}(0)\, \ldots\,
x_{g_{t-1}}(0)\,y_g(1)\,x_{g_t}(0)\,\ldots\,
 x_{g_k}(0)\in I_e.
\end{equation}
 \vskip2ex

{\bf Definition of level $\pmb{s>0}$.}  Suppose that we have
already fixed representatives of level $<s$, which are
 either $x$-representatives  or  $b$-representatives and its
 number is $(m,n,d)$-bounded.
 We now define the  {\it generalized centralizers of level $s$}. Let $W_s=W_{s-1}+1=2d^3(n-1)+1+s$.  For each  $g\in G \setminus
 \{e\}$ we set
$$A_g(s)=\bigcap_{\vec z}\,\bigcap_{l}\,\mbox{Ker}\, \vartheta_{\vec z,l},$$
where $\vec z=\left( z_{g_1}(\varepsilon_1),\,\ldots, \,
z_{g_k}(\varepsilon_k)\right) $ runs over all possible ordered
tuples of all lengths
 $k\leq W_s$ consisting of representatives of (possibly different) levels $<s$
 (i.~e.,  $z_{g_u}(\varepsilon_u)$ denote elements of the form
 $x_{g_u}(\varepsilon_u) $ or $b_{g_u}(\varepsilon_u)$, $\varepsilon_u<s$,
 in
 any combination) such that
 $$ g_1\ldots g  \ldots g_k=e;$$  if for a fixed tuple
 $\vec z=\left( z_{g_1}(\varepsilon_1),\, \ldots ,\, z_{g_k}(\varepsilon_k)\right) $ of length $k$ there are several different integers $l\leq k+1$  such that
 $ g_1\ldots g_{l-1}g g_{l}\ldots g_k=e$, we take the intersection over all such  integers~$l$. Elements of the  $A_g(s)$ are also called  {\it
centralizers of level $ s$} and
 are denoted  by
 $y_g(s)$ (under the Index Convention).

The intersection here is taken over an  $ (n,d,m) $-bounded number
of subspaces of $m$-bounded codimension in  $A_g$, since the
number of representatives of all levels $<s$ is
 \ $(n,d,m)$-bounded and  ${\rm dim}\,A_g/\mbox{Ker}\, \vartheta _{\vec
z,l}\leq m$ for all $\vec z$. Hence
 $A_g(s)$ also has $(n,d,m)$-bounded codimension in the subspace~$A_{g}$.

We now fix representatives of level~$s$. First, for each
 $g\in G \setminus \{e\}$ we fix some elements of $A_g$ whose images form a
basis of of the factor-space
 $A_g/A_g(s)$.
These elements are denoted by
 $ b_g(s)\in A_g$ (under the Index Convention) and  are called
{\it $b$-representatives of level $s$}. The total number of
 $b$-representatives of level $s$ is
 $(n,d,m)$-bounded, since the dimensions  $A_g/A_g(s)$ are
$(n,d,m)$-bounded for all $g \in G\setminus \{e\}$.

Second, for each pattern ${\bf P}=(\ast_g \, \ast_{g^{-1}})$ of
length $2$ with indices  $g\in G\setminus \{e\}$ we consider the
subspace of the factor-space $A_e/I_e$ spanned by all images of
values of ${\bf P}$ on homogeneous elements  of $A_{g}(s),\,
A_{g^{-1}}(s)$. Since ${\rm dim}\, A_e/I_e\leq m$, we can choose
at most $m$ products $c=y_g(s) y_{g^{-1}}(s)\in A_e$ whose images
form a basis of this subspace in  $A_e/I_e$ and fix the elements
$y_g(s),\,y_{g^{-1}}(s)$
  involved in these representations.  These fixed elements
   are called {\it $x$-representatives of level $s$} and are
denoted by
 $ x_{g}(s)$
(under the Index Condition). The total number of
$x$-representatives of level  $s$ is at most $2(n-1)m$.
Note that
$x$-representatives of level $s$, elements
 $x_g(s)$, are also centralizers of level
 $s$.

It is clear from the construction that
\begin{equation}\label{f1-2} A_g(k+1)\leq A_g(k)\end{equation} for all $g\in
G\setminus\{e\}$ and any $k$.

By definition, if $ g_1\ldots g_{t-1}\,g\, g_{t} \ldots g_k=e$,
for some $t\leq k+1$ and   $k\leq W_s$, then a centralizer
$y_g(s)$ has the following property with respect to
representatives of lower levels:
\begin{equation} \label{f1-3} z_{g_1}(\varepsilon_1)\, \ldots\,
z_{g_{t-1}}(\varepsilon_{t-1})\,y_g(s)\,z_{g_t}(\varepsilon_t)\,\ldots\,
 z_{g_k}(\varepsilon_k)\in I_e,
 \end{equation}
where the elements $z_{g_j}(\varepsilon_j)$ are representatives
(that is, either $b_{g_j}(\varepsilon_j)$
 or $x_{g_j}(\varepsilon_j)$, in any combination) of any (possible different)
levels  $\varepsilon_l<s$. \vskip2ex

The following lemmas are direct consequences of the inclusions
(\ref{f1-2}), (\ref{f1-3}) and the definitions of representatives.
\vskip2ex

\begin{lemma}\label{l2} Let $g\in G\setminus \{e\}$. Then

1) every homogeneous element $a_e\in A_e$ can be represented
modulo $I_e$ as a linear combination of  representatives $x_e(0)$
of level~$0$.

 2) every product $a_g\, b_{g^{-1}}$ in homogeneous elements
can be represented modulo $I_e$ as a linear combination of
products of the same pattern in representatives of level~$0$.

3) every product  $y_{g}(k_1)y_{g^{-1}}(k_2)$  in centralizers of
levels  $k_1,k_2$ can be represented modulo $I_e$ as a linear
combination of products
 $x_{g}(s)x_{g^{-1}}(s)$ of the same pattern in
$x$-representatives of any level $s$ satisfying  $ 0\leq s\leq
\min \{ k_1,k_2\}$.
\end{lemma}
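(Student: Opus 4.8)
The plan is to obtain all three statements by directly unwinding the construction of the representatives, using only the monotonicity $(\ref{f1-2})$ of the chains $A_g(0)\ge A_g(1)\ge\cdots$ together with the fact that at every level the finitely many chosen products span the relevant subspace of $A_e/I_e$. For 1), recall that the representatives $x_e(0)$ of level $0$ were, by definition, fixed so that their images form a basis of $A_e/I_e$; hence for any homogeneous $a_e\in A_e$ the coset $a_e+I_e$ is a finite $F$-linear combination of the cosets $x_e(0)+I_e$, which is precisely the assertion.

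For 2), fix $g\in G\setminus\{e\}$ and the pattern ${\bf P}=(\ast_g\,\ast_{g^{-1}})$. By the definition of level $0$, the images in $A_e/I_e$ of the finitely many products $x_g(0)x_{g^{-1}}(0)$ form a basis of the subspace $S_{\bf P}\le A_e/I_e$ spanned by the images of \emph{all} values of ${\bf P}$ on homogeneous elements of $A_g$ and $A_{g^{-1}}$. Any product $a_g b_{g^{-1}}$ is such a value, so its image lies in $S_{\bf P}$ and is therefore an $F$-linear combination of the images of the $x_g(0)x_{g^{-1}}(0)$; lifting this relation back to $A_e$ expresses $a_g b_{g^{-1}}$ modulo $I_e$ as the required linear combination in representatives of level $0$.

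For 3), let $s$ satisfy $0\le s\le\min\{k_1,k_2\}$. By $(\ref{f1-2})$ we have $A_g(k_1)\le A_g(s)$ and $A_{g^{-1}}(k_2)\le A_{g^{-1}}(s)$, so the centralizers $y_g(k_1)$ and $y_{g^{-1}}(k_2)$ are in particular centralizers of level $s$, and their product $y_g(k_1)y_{g^{-1}}(k_2)$ is a value of the pattern $(\ast_g\,\ast_{g^{-1}})$ on homogeneous elements of $A_g(s)$ and $A_{g^{-1}}(s)$. By the construction of the $x$-representatives of level $s$, the images of the products $x_g(s)x_{g^{-1}}(s)$ form a basis of the subspace of $A_e/I_e$ spanned by the images of all such values; hence the image of $y_g(k_1)y_{g^{-1}}(k_2)$ is a linear combination of the images of the $x_g(s)x_{g^{-1}}(s)$, as claimed.

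There is no genuine obstacle here: each of the three parts is a one-line consequence of a definition. The only point deserving (minimal) attention is part 3), where the two factors may come from centralizers of different levels $k_1\ne k_2$; this is dispatched by first passing both factors down to the common lower level $s$ via the inclusions $(\ref{f1-2})$, and only then invoking the defining spanning property of the $x$-representatives of level $s$.
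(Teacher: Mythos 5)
Your proposal is correct: all three parts are indeed immediate from the definitions of the level-$0$ and level-$s$ representatives together with the inclusions (\ref{f1-2}), which is exactly how the paper treats this lemma (it is stated there as a direct consequence of the construction, with no further proof). Your handling of part 3), first lowering both factors to the common level $s$ via (\ref{f1-2}) and then invoking the spanning property of the $x$-representatives of level $s$, is the intended argument.
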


 \begin{lemma}\label{l4}  Let $y_{g}(l+1)$ be a centralizer of
  level $l+1$, $b_{h}(l)$ be $b$-representative of level $l$ with $g,h, gh\in G\setminus e$. Then elements of the form $u_{gh}=y_{g}(l+1)b_{h}(l)$ or  $v_{hg}=b_{h}(l)y_{g}(l+1)$
  are
 centralizers of level
$l$. \end{lemma}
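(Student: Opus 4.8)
The plan is to show that the product $u_{gh}=y_g(l+1)b_h(l)$ satisfies the defining property \eqref{f1-3} for centralizers of level $l$; the argument for $v_{hg}=b_h(l)y_g(l+1)$ is entirely symmetric. So let $\vec z=(z_{g_1}(\varepsilon_1),\ldots,z_{g_k}(\varepsilon_k))$ be a tuple of representatives of levels $\varepsilon_i<l$ with $k\leq W_l$, and suppose $g_1\cdots g_{t-1}\,(gh)\,g_t\cdots g_k=e$ for some $t\leq k+1$. We must verify that
\[
z_{g_1}(\varepsilon_1)\cdots z_{g_{t-1}}(\varepsilon_{t-1})\,u_{gh}\,z_{g_t}(\varepsilon_t)\cdots z_{g_k}(\varepsilon_k)\in I_e.
\]
Substituting $u_{gh}=y_g(l+1)b_h(l)$, the left-hand side becomes a product of the form
\[
z_{g_1}(\varepsilon_1)\cdots z_{g_{t-1}}(\varepsilon_{t-1})\,y_g(l+1)\,b_h(l)\,z_{g_t}(\varepsilon_t)\cdots z_{g_k}(\varepsilon_k),
\]
which I read as the value of a map $\vartheta_{\vec w,t}$ applied to $y_g(l+1)$, where $\vec w=(z_{g_1}(\varepsilon_1),\ldots,z_{g_{t-1}}(\varepsilon_{t-1}),b_h(l),z_{g_t}(\varepsilon_t),\ldots,z_{g_k}(\varepsilon_k))$. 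The key points are: (i) every entry of $\vec w$ is a representative of a level $<l+1$, since $\varepsilon_i<l<l+1$ and $b_h(l)$ has level $l<l+1$; (ii) the length of $\vec w$ is $k+1\leq W_l+1=W_{l+1}$; and (iii) the index condition holds, because inserting the factor with index $h$ after position $t-1$ and before position $t$ splits the original relation $g_1\cdots g_{t-1}\,(gh)\,g_t\cdots g_k=e$ as $g_1\cdots g_{t-1}\,g\,(h\,g_t\cdots g_k)=e$, which is exactly the statement that $g$ appears in a product of the new tuple's indices (with $g$ inserted at position $t$) equal to $e$. Hence $\vartheta_{\vec w,t}$ is one of the maps whose kernel is intersected in the definition of $A_g(l+1)$, so $y_g(l+1)\in\mathrm{Ker}\,\vartheta_{\vec w,t}$, which gives precisely $z_{g_1}(\varepsilon_1)\cdots b_h(l)\cdots z_{g_k}(\varepsilon_k)\in I_e$ as required. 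Since this holds for every admissible tuple, $u_{gh}$ is a centralizer of level $l$.

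The only subtlety — and the step I would be most careful about — is the bookkeeping in point (ii): one must confirm that $W_{l+1}=W_l+1$ was chosen exactly so that appending the single extra representative $b_h(l)$ keeps the new tuple within the length bound governing level $l+1$. This is why the definition sets $W_s=W_{s-1}+1$, and it is the whole reason the construction uses a strictly increasing sequence of bounds rather than a single $W$. Everything else is a direct unwinding of the definitions in \eqref{f1-3} together with the observation that $b_h(l)$ and all the $z_{g_i}(\varepsilon_i)$ qualify as representatives of levels below $l+1$.
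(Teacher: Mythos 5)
Your proof is correct and is exactly the argument the paper intends: the paper's own proof is the one-line remark that the statement "follows directly from (\ref{f1-3}) and the definitions of $W_i$", and your unwinding --- inserting $b_h(l)$ into the tuple, noting all entries then have level $<l+1$, that the length grows only to $W_l+1=W_{l+1}$, and that the index relation becomes $g_1\cdots g_{t-1}\,g\,(h\,g_t\cdots g_k)=e$ so that $y_g(l+1)$ lies in the relevant kernel --- is precisely that verification, including the correct identification of why $W_s=W_{s-1}+1$ is needed.
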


\begin{proof} The proof follows directly from (\ref{f1-3}) and the definitions of $W_i$.
\end{proof}

\begin{lemma}\label{l5} Any product of the form $a_{g^{-1}}\,y_g(k+1)$ or
$y_g(k+1)\,a_{g^{-1}}$, where $y_g(k+1)$ is a centralizer of level
 $k>0$,  is equal modulo $I_e$ to a  product of the form $y_{g^{-1}}(k)\,y_g(k)$ or
 accordingly $y_g(k)\,y_{g^{-1}}(k)$,  where
$y_{g^{-1}}(k), \, \,y_g(k)$ are  centralizers of level
$k-1$.\end{lemma}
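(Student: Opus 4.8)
The plan is to split $a_{g^{-1}}$ along the generalized centralizer $A_{g^{-1}}(k)$ and to absorb the complementary part by means of the centralizing relation (\ref{f1-3}). Since the images of the $b$-representatives of level $k$ form a basis of $A_{g^{-1}}/A_{g^{-1}}(k)$ — and here the hypothesis $k>0$ guarantees that genuine $b$-representatives of level $k$ are at our disposal — I would first write
$$a_{g^{-1}}=\sum_j \lambda_j\, b_{g^{-1}}(k)+y_{g^{-1}}(k),\qquad \lambda_j\in F,\ \ y_{g^{-1}}(k)\in A_{g^{-1}}(k),$$
where the elements $b_{g^{-1}}(k)$ occurring in the sum are (possibly different) $b$-representatives of level $k$. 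Multiplying on the right by $y_g(k+1)$ yields
$$a_{g^{-1}}\,y_g(k+1)=\sum_j \lambda_j\, b_{g^{-1}}(k)\,y_g(k+1)+y_{g^{-1}}(k)\,y_g(k+1).$$

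Next I would argue that every product $b_{g^{-1}}(k)\,y_g(k+1)$ lies in $I_e$. This is exactly an instance of (\ref{f1-3}) applied to the level-$(k+1)$ centralizer $y_g(k+1)$ and the length-one tuple $\vec z=(b_{g^{-1}}(k))$, whose only entry is a representative of level $k<k+1$: as $g^{-1}g=e$, this tuple is admissible and the product belongs to $I_e$. Note that this is precisely the case excluded from Lemma~\ref{l4} (there the product of the two homogeneous elements is required to lie in a component $A_{gh}$ with $gh\ne e$), which is why the present statement needs a separate proof. It follows that $a_{g^{-1}}\,y_g(k+1)\equiv y_{g^{-1}}(k)\,y_g(k+1)\pmod{I_e}$. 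Since $y_{g^{-1}}(k)\in A_{g^{-1}}(k)$ is a centralizer of level $k$ and, by (\ref{f1-2}), $y_g(k+1)\in A_g(k+1)\subseteq A_g(k)$ is also a centralizer of level $k$, renaming the latter $y_g(k)$ produces a product $y_{g^{-1}}(k)\,y_g(k)$ of the required form (in particular of level $\ge k-1$). The left-hand version $y_g(k+1)\,a_{g^{-1}}$ is treated symmetrically, this time invoking $gg^{-1}=e$ in (\ref{f1-3}) to get $y_g(k+1)\,b_{g^{-1}}(k)\in I_e$.

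I do not expect a genuine obstacle: once the construction of Section~3 is in place, this is essentially bookkeeping with the definitions. The only point that needs a moment's attention is checking that the ``bad'' products $b_{g^{-1}}(k)\,y_g(k+1)$ and $y_g(k+1)\,b_{g^{-1}}(k)$ really fall under (\ref{f1-3}) — that the relevant length-one tuple is among those used in the definition of $A_g(k+1)$, which requires only $W_{k+1}\ge 1$ and the trivial relations $g^{-1}g=e$, $gg^{-1}=e$ — together with the observation, via (\ref{f1-2}), that $y_g(k+1)$ may be regarded as a centralizer of level $k$, so that the two factors on the right are of the stated level.
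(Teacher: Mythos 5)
Your proof is correct and is essentially the paper's own argument: the same decomposition of $a_{g^{-1}}$ into a level-$k$ centralizer plus a linear combination of $b$-representatives of level $k$, absorption of the terms $b_{g^{-1}}(k)\,y_g(k+1)$ (resp.\ $y_g(k+1)\,b_{g^{-1}}(k)$) into $I_e$ via (\ref{f1-3}), and the inclusion $A_g(k+1)\leq A_g(k)$ from (\ref{f1-2}) to put both factors at the same level. Your closing remark about the level discrepancy (the factors are in fact centralizers of level $k$, hence a fortiori of the level claimed in the statement) is a fair reading of the paper's slightly loose wording and does not affect correctness.
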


\begin{proof} We represent  $a_{g^{-1}}$ as a sum of a centralizer $y_{g^{-1}}(k)$ of level
$k$ and a linear combination  of $b$-representatives
$b_{g^{-1}}(k)$ of level $k$ and substitute this sum into the
product $a_{g^{-1}}\,y_g(k+1)$.  We obtain a sum of the element
$y_{g^{-1}}(k)\,y_g(k+1)$ and a linear combination of elements of
the form $b_{g^{-1}}(k)\,y_g(k+1)$. By~(\ref{f1-3}) the product
$b_{g^{-1}}(k)\,y_g(k+1)$ belongs to $I_e$.  Hence
$a_{g^{-1}}\,y_g(k+1)=y_{g^{-1}}(k)\,y_g(k+1)\,(\mathrm{mod\,}I_e).$
Similarly,
$y_{g^{-1}}(k+1)\,a_{g}=y_{g^{-1}}(k+1)\,y_g(k)\,(\mathrm{mod\,}I_e).$
Since $A_g(k)\geq A_g(k+1)$, both products  have the required
form.
\end{proof}

{\bf Construction of nilpotent ideal.}  Recall that $N=d^2+3$ is
the fixed notation for the highest level. We have constructed the
generalized centralizers $A_g(N)$ for
 $g\in G\setminus \{e\}$. Let $G\setminus \{e\}=\{g_1,\ldots, g_{n-1}\}$
 We set
$$
Z=_{\rm id}\left< A_{g_1}(N),\,A_{g_2}(N),\ldots ,A_{g_{n-1}}(N),
I_e\right>.$$
 This ideal  has $(n,d,m)$-bounded codimension in  $A$,
since each subspace  $A_h(N)$, $h\in G\setminus \{e\}$,  has
$(n,d,m)$-bounded codimension in $A_h$, while the dimension of
$A_e/I_e$ is at most $m$ by hypothesis. To prove the
Theorem~\ref{th2} we show  that the ideal $Z$ is nilpotent of
$(n,d)$-bounded class.
 \vskip2ex

{\bf Definition.} For every  $g\in G$ we set $Z_g=Z\cap A_g$.
\vskip1ex

\begin{lemma} \label{l6}
  The subspace $Z_e$ is contained modulo $I_e$ in the subspace
spanned by  products of the form $y_{h^{-1}}(N-2)\,y_h(N-2)$  and
by products of the form $a_{g^{-1}}\,i_e\, a_{g},$ where
$y_{h^{-1}}(N-2)$, $y_h(N-2)$ are centralizers of level $N-2$,
$a_{g^{-1}}\in A_{g^{-1}}$, $a_{g}\in A_{g}$, $i_e\in I_e$, $h,g
\in G\setminus \{e\}$.
\end{lemma}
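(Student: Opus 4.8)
The plan is to reduce the claim to a short list of rewriting rules for centralizers. Since $Z$ is the two-sided ideal generated by $I_e$ together with the subspaces $A_h(N)$, $h\in G\setminus\{e\}$, its homogeneous component $Z_e=Z\cap A_e$ is spanned by the degree-$e$ monomials $a_u\,w\,a_v$, where $w$ is one of the generators --- either $w=i_e\in I_e$, or $w=y_h(N)\in A_h(N)$ with $h\ne e$ --- and $a_u$, $a_v$ are homogeneous elements of $A$ of degrees $u$ and $v$ (either factor being allowed to equal the adjoined unit $1$, understood to have degree $e$), with $u\,(\deg w)\,v=e$. So it is enough to rewrite every such monomial modulo $I_e$ as a combination of the two forms in the statement. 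I would use throughout: $I_e$ is a two-sided ideal of $A_e$; the defining property (\ref{f1-3}); the chain (\ref{f1-2}); assertion (1) of Lemma~\ref{l2}; and two observations, each immediate from (\ref{f1-3}) together with $W_s=W_{s-1}+1$. First, \emph{(a)}: adjoining a representative of some level $<N$ on the left or the right of a centralizer $y_h(N)$ of level $N$ gives again a centralizer, of level $N-1$ and of index equal to the product of the two indices (since $W_{N-1}+1=W_N$, the level drops by exactly one). Second, \emph{(b)}: if $M\le N$, if $y_g(M)$ is a centralizer of level $M$ and $c_{g^{-1}}\in A_{g^{-1}}$ is arbitrary, then, writing $c_{g^{-1}}=y_{g^{-1}}(M-1)+\sum b_{g^{-1}}(M-1)$ in level-$(M-1)$ representatives and noting $y_g(M)\,b_{g^{-1}}(M-1)\in I_e$ by (\ref{f1-3}), one gets $y_g(M)\,c_{g^{-1}}\equiv y_g(M)\,y_{g^{-1}}(M-1)\pmod{I_e}$, and symmetrically on the other side (a sharpening of Lemma~\ref{l5} that loses only one level).

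Next the easy cases. If $w=i_e\in I_e$ then $uv=e$: for $u=e$ the monomial lies in $A_e^{\#}I_eA_e^{\#}\subseteq I_e$, and for $u=g\ne e$ we have $v=g^{-1}$ and the monomial is already of the form $a_{g^{-1}}i_ea_g$ (after the harmless relabelling $g\leftrightarrow g^{-1}$). Now let $w=y_h(N)$ with $h\ne e$, so $uhv=e$, and distinguish cases by which of $u,v$ equals $e$. If $u=e$ (hence $v=h^{-1}$) I expand $a_{h^{-1}}$ in level-$(N-1)$ representatives; the $b$-representative contributions contain the factor $y_h(N)\,b_{h^{-1}}(N-1)\in I_e$ (by (\ref{f1-3})), hence lie in $A_e^{\#}I_e\subseteq I_e$, so the monomial is congruent mod $I_e$ to $a_u\,y_h(N)\,y_{h^{-1}}(N-1)$. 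If $a_u=1$ this is already of the first form; if $a_u=a_e\in A_e$ I replace $a_e$ mod $I_e$ by a linear combination of level-$0$ representatives $x_e(0)$ (Lemma~\ref{l2}(1)) --- the $I_e$-error multiplies an element of $A_e$ and so stays in $I_e$ --- and by (a) every $x_e(0)\,y_h(N)$ is a centralizer of level $N-1$, so the monomial becomes a combination of products of two centralizers of level $N-2$. The case $v=e$ is the mirror image.

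The essential case is $u,v,h$ all $\ne e$; then $hv=u^{-1}\ne e$ and $uh=v^{-1}\ne e$. I expand both $a_u$ and $a_v$ in level-$(N-1)$ representatives, producing four types of terms. The term $b_u(N-1)\,y_h(N)\,b_v(N-1)$ lies in $I_e$ by (\ref{f1-3}) --- a level-$N$ centralizer flanked by representatives of smaller level, the degrees multiplying to $e$. In $y_u(N-1)\,y_h(N)\,b_v(N-1)$ the subword $y_h(N)\,b_v(N-1)$ is, by (a) applied with the split $h,v$ in place of $hv$, a centralizer of level $N-1$ and index $hv=u^{-1}$, so this term has the first form; symmetrically for $b_u(N-1)\,y_h(N)\,y_v(N-1)$. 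Finally, for $y_u(N-1)\,y_h(N)\,y_v(N-1)$ I set $c_{u^{-1}}:=y_h(N)\,y_v(N-1)\in A_{u^{-1}}$ and apply (b) with $M=N-1$, obtaining $y_u(N-1)\,c_{u^{-1}}\equiv y_u(N-1)\,y_{u^{-1}}(N-2)\pmod{I_e}$, again of the first form because by (\ref{f1-2}) both factors are, in particular, centralizers of level $N-2$. Assembling the cases completes the proof.

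The point that requires care --- and the reason $N$ was taken as large as $d^{2}+3$ --- is the bookkeeping of levels: passing from an arbitrary homogeneous factor to representatives costs exactly one level below $N$, and the extra reduction (b) needed to absorb the ``three-centralizer'' term $y_u(N-1)\,y_h(N)\,y_v(N-1)$ costs one further level, which is why the conclusion lands precisely at level $N-2$ (the cruder form of Lemma~\ref{l5} would lose one level too many here). The only genuinely delicate verification is that in (a) the resulting centralizer carries the product index, so that, e.g., $y_h(N)\,b_v(N-1)$ really becomes a centralizer of index $hv$; the remaining steps are routine applications of (\ref{f1-3}).
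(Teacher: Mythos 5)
Your proof is correct and follows essentially the same route as the paper: you decompose $Z_e$ into homogeneous monomials $a_u\,w\,a_v$ centered on a single generator (the paper's forms (\ref{f2-1})--(\ref{f2-4})), expand the flanking factors into a level-$(N-1)$ centralizer plus $b$-representatives, and reduce via the defining property (\ref{f1-3}) together with the inclusions (\ref{f1-2}). Your observations \emph{(a)} and \emph{(b)} are just mild in-line variants of the paper's Lemma~\ref{l4} (extended to allow the index-$e$ representatives $x_e(0)$, which is legitimate since such representatives may occur in the tuples defining the centralizers) and Lemma~\ref{l5}, so the mechanism and the landing at level $N-2$ coincide with the paper's argument.
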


\begin{proof} An element of $Z_e$ is modulo $I_e$ a linear combination of products of the  forms:
\begin{equation} \label{f2-1} a_{g^{-1}}\,i_e\, a_{g}, \,\,\,\mathrm{
where}\,\,\, a_{g^{-1}}\in A_{g}, \,\, i_e\in I_e,\,\, a_{g}\in
A_{g},\,\,g\neq e \{e\}\end{equation}
\begin{equation} \label{f2-2}a_{g^{-1}}\,y_g(N),
\,\,\,\mathrm{ where}\,\,\, a_{g^{-1}}\in A_{g^{-1}}, \,\, g\neq
e,\,\,\,y_{g}(N)\in A_{g}(N),\,\, \end{equation}
\begin{equation} \label{f2-3} y_g(N)\,a_{g^{-1}},\,\,\,\mathrm{
where}\,\,\,\,\, g\neq e,\,\,\, y_{g}(N)\in A_{g}(N), \,\,
a_{g^{-1}}\in A_{g^{-1}},\,\, \end{equation}
\begin{equation} \label{f2-4} a_{g_1}\,y_g(N)\,a_{g_{2}},\,\,\,\mathrm{
where}\,\,\,a_{g_1}\in A_{g_1}, \,\, a_{g_2}\in A_{g_2},\,\,
y_g(N)\in A_g(N),\,\,\, g_1gg_2=e,\end{equation} The product
(\ref{f2-1}) is already of the required form. By Lemma~\ref{l5}
the products $y_g(N)\,a_{g^{-1}}$ and $a_{g^{-1}}\,y_g(N)$ can be
represented modulo $I_e$ as linear combinations of products of the
form $y_g(N-1)\,y_{g^{-1}}(N-1)$ and therefore have also the
required representation since $A_g(N-1)\leq A_g(N-2)$.

Consider  the product (\ref{f2-4}).   Since $g_1gg_2=e$ and $g\neq
e$, at least one $g_i$, $i=1, 2$ is not equal to  $e$. Let, for
example, $g_1\neq e$. We represent $a_{g_1}$  as a sum of  a
centralizer $y_{g^{-1}}(N-1)$ of level $N-1$ and a linear
  combination of
$b$-representatives $b_{g^{-1}}(N-1)$ of level $N-1$ and insert
this expression into (\ref{f2-4}). We obtain a linear combination
of products of the following two forms
\begin{equation} \label{f2-5} y_{g_1}(N-1)\,y_g(N)\,a_{g_2} \end{equation} and
\begin{equation} \label{f2-6} b_{g_1}(N-1)\,y_g(N)\,a_{g_2}. \end{equation}
In (\ref{f2-5}) we set $a_{g_1^{-1}}:=y_g(N)\,a_{g_2}$.   Applying
Lemma \ref{l5} and the inclusions~(\ref{f1-2}) to
$y_{g_1}(N-1)a_{g_1^{-1}}$ we obtain that (\ref{f2-5}) is equal
modulo $I_e$ to a product of the required form
$y_{g_1}(N-2)\,y_{g_1^{-1}}(N-2)$.

Let us now consider  the product (\ref{f2-6}). If $g_2=e$, then
$g_1g=e$ and $b_{g_1}(N-1)\,y_g(N)\in I_e$ by~(\ref{f1-3}). Since
$I_e$ is an ideal of $A_e$ and $g_2=e$,
$$b_{g_1}(N-1)\,y_g(N)\,a_{e}\in I_e.$$
If $g_2\neq e$, then $g_1g\neq e$ and $b_{g_1}(N-1)\,y_g(N)$ is a
 a centralizer of level $N-1$ by Lemma~\ref{l4}:
$$b_{g_1}(N-1)\,y_g(N)\,a_{g_2} =
y_{g_1g}(N-1)\,a_{g_2}.$$
Again by Lemma \ref{l5} the product
$y_{g_1g}(N-1)\,a_{g_2}$ is equal modulo $I_e$ to the product of
the require form $y_{g_1g}(N-2)\,y_{g_2}(N-2)$.  The case where
$g_1=e, g_2\neq e$ in (\ref{f2-4}) can be treated in the same
manner.
\end{proof}

\vskip2ex

{\it Proof of Theorem~{\rm \ref{th2}}.\/} We set $H=d^2+1$,
$T=d(H-1)+1=d^3+1$, \, $S=(T-1)(n-1)+1=d^3(n-1)+1$, \,
$U=d(n-1)$,\, and
$Q=(U+1)(S-1)+1=(d(n-1)+1)d^3(n-1)=d^4(n-1)^2+d^3(n-1)$.
 By Lemma~\ref{l-b-i-2} it suffices to show that $ (Z_{e})^{Q}=0$.

We consider an arbitrary  product of length $Q$ in elements $c_i$
from $Z_e$:
\begin{equation}\label{f3-1} c_1\,c_2 \ldots
c_Q,\end{equation} (here the indices are numbering). By
Lemma~\ref{l6} we can represent modulo $I_e$ every $c_k$ as a
linear combination of products of some special form. Substituting
these expressions into (\ref{f3-1}) we obtain a linear combination
of elements
\begin{equation}\label{f3-2} z_1\,z_2\,\ldots z_Q,\end{equation} where the $z_k$ (here the indices are also numbering) are
either elements $i_e\in I_e$ or products  $c_e=a_{g^{-1}}\,w_e\,
a_{g}\in A_e$, $w_e\in I_e$ or products  $v_e=y_{g_k^{-1}}(N-2)\,
y_{g_k}(N-2)\in A_e,$ in centralizers $y_{g_k^{-1}}(N-2),\,
y_{g_k}(N-2)$  of level $N-2$.

If in (\ref{f3-2}) among $z_k$ there are at least $d$ occurrences
of elements $i_e\in I_e$ the summand is trivial, since $I_e$ is an
ideal of $A_e$ and $(I_e)^d=0$.

Suppose now that  in (\ref{f3-2}) there are at least $(d-1)n+1$
entries of products  $c_e=a_{g^{-1}}\,i_e\, a_{g}$. Among them we
can choose
 $d$ products $c_e=a_{g_k^{-1}}\,i_e\, a_{g_k}\in A_e$ with the same pair of
indices $g_k^{-1}, \,g_k$:
$$z_1\,\ldots z_{l_1}\,\underbrace{a_{g_k^{-1}}\,i_e\, a_{g_k}}\,z_{l_1+1}\ldots z_{l_2}\,
\underbrace{a_{g_k^{-1}}\,i_e\, a_{g_k}}\,z_{l_2} \ldots
z_{l_k}\,\underbrace{a_{g_k^{-1}}\,i_e\, a_{g_k}}\,
z_{l_k+1}\ldots z_Q .$$ Since the products
$a_{g_k}\,z_{l_s+1}\ldots z_{l_{s+1}}\, a_{g_k^{-1}}$ between the
elements $i_e$ belong to $A_e$,  $I_e$ is an ideal in $A_e$ and
$(I_e)^d=0$, then the  product (\ref{f3-2}) is equal to $0$.

Consider the case where      the number of $i_e$-occurrences in
(\ref{f3-2}) is at most $d$ and the number of $c_e$-occurrences is
at most $U=d(n-1)$. Since $Q= (U+d+1)(S-1)+1$, the product
(\ref{f3-2}) has at least one subproduct  consisting of $S$
elements $v_e$ (going one after another):
\begin{equation}\label{f3-3}\big(y_{g_1^{-1}}(N-2)\,
y_{g_1}(N-2)\big)\,\,\big(y_{g_2^{-1}}(N-2)\, y_{g_2}(N-2)\big)
\ldots \big(y_{g_{S}^{-1}}(N-2)\, y_{g_{S}}(N-2)\big),
\end{equation} where\, $y_{g_i}(N-2)\in A_{g_i}(N-2)$,  $y_{g_i^{-1}}(N-2)\in A_{g_i^{-1}}(N-2)$ are (possibly different)
centralizers of level $N-2$.  Since $S=(T-1)(n-1)+1$ in
(\ref{f3-3}) there are at least $T$ entries of products
$y_{g_i^{-1}}(N-2)y_{g_i}(N-2)$ with the same pair of indices,
say, $g_k^{-1}, \,g_k$. We choose any $T$ such products and
represent  modulo $I_e$  all the other pairs as linear
combinations of  products of representatives of level $0$     by
Lemma \ref{l2}:
\begin{equation}\label{f3-4}w_e\ldots\, w_e \big(y_{g_k^{-1}}(N-2)\,
y_{g_k}(N-2)\big)\,w_e\ldots w_e\big(y_{g_k^{-1}}(N-2)\,
y_{g_k}(N-2)  \big) \ldots ,  \end{equation} where there are $T$
occurrences of (possibly different)  products
$y_{g_k^{-1}}(N-2),\, y_{g_k}(N-2)$ with the  same pair of indices
$g_k^{-1}, \,g_k$, the $w_e$ are possibly different elements of
$A_e$: either $i_e\in I_e$ or representatives $x_e(0)$. If in
(\ref{f3-4}) among $w_e$ there are at least $d$ occurrences of
elements of $I_e$ the summand is trivial, since $I_e$ is an ideal
of $A_e$ and $(I_e)^d=0$. In the opposite case, as $T=d(H-1)+1$,
there is a subproduct  of the form
$$\big(y_{g_k^{-1}}(N-2)\, y_{g_k}(N-2)\big)\,x_e(0)\ldots
x_e(0)\big(y_{g_k^{-1}}(N-2)\, y_{g_k}(N-2)  \big) \ldots,
$$
where there are $H=d^2+1$ occurrences of products
$y_{g_k^{-1}}(N-2)\, y_{g_k}(N-2)$ and between them there are only
$x$-representative of level $0$ and no elements from $I_e$. By
lemma \ref{l2} we represent modulo $I_e$ the first entry
$y_{g_k^{-1}}(N-2)\, y_{g_k}(N-2)$ as a linear combination of the
products of the same pattern in representatives  in level $1$, the
second
--- in level $2$, and so on, the last one --- in level $H$. We
obtain  a linear combination
$$\big(x_{g_k^{-1}}(1)\, x_{g_k}(1)+i_e\big)\,x_e(0)\ldots
x_e(0)\big(x_{g_k^{-1}}(2)\, x_{g_k}(2)+i_e  \big) \ldots
\big(x_{g_k^{-1}}(H)\, x_{g_k}(H)+i_e\big).$$
Expanding this expression we get  a linear combination of products
of the form
$$c_1\,x_e(0)\ldots
x_e(0)\,c_2\, x_e(0)\ldots x_e(0)\,\ldots c_H,$$ (here the indices
are numbering) where the $c_k$ are either elements $i_e\in I_e$ or
products $x_{g_k^{-1}}(n_k)\, x_{g_k}(n_k)$ of different levels
with one and the same pair of indices $g_k^{-1},\,g_k\in G$. If in
a summand there are at least $d$ entries of $i_e\in I_e$ it is
trivial by assumptions. In the summands with  less than $d$
entries of the $i_e$, we can find
 an interval long enough without $i_e$-entries.  More precisely, since $H=d^2+1$
 there is a subproduct of the form
$$\big(x_{g_k^{-1}}(s)\, x_{g_k}(s)\big)\,x_e(0)\ldots
x_e(0)\big(x_{g_k^{-1}}(s+1)\, x_{g_k}(s+1)\big) \ldots
\big(x_{g_k^{-1}}(s+d)\, x_{g_k}(s+d)\Big),$$
where there are $d+1$  products $y_{g_k^{-1}}(l)\, y_{g_k}(l)$ of
different levels $l=s,\ldots, s+d$. For each $t=0,\ldots,d-1$, the
product $$ x_{g_k}(s+t)\,x_e(0)\ldots
x_e(0)\,x_{g_k^{-1}}(s+t+1)$$ includes exactly one centralizer of
level $s+t+1$, all the other elements are representatives of lower
levels, and the weight of the product is at most
$2S=2d^3(n-1)+2=W_1\leq W_{s+t+1}$. By  (\ref{f1-3})
$$ x_{g_k}(s+t)\,x_e(0)\ldots
x_e(0)\,x_{g_k^{-1}}(s+t+1)\in I_e$$ for all $t=0,\ldots,d-1$. It
follows that (\ref{f3-7}) is equal to product
 $$ x_{g_k^{-1}}(s)
\underbrace{(i_e\, i_e\, \ldots\, i_e)}_{d}\,x_{g_k}(s+d)=0,$$
which is trivial, since $(I_e)^d=0$. \ep

\section{Proof of the main result}\label{proof-theorems}

In this section we prove Theorem~\ref{th1}. Recall that we are
given an associative algebra $A$ over a field $F$ that admits a
finite soluble automorphism group $G$ of order $n$ prime to the
characteristic of $F$ such that the fixed-point  subalgebra  $A^G$
has a two-sided nilpotent ideal $I\lhd A^G$ of nilpotency index
$d$ and of finite codimension  $m$ in $A^G$. The aim is to  find a
nilpotent ideal in $A$ of ($n,d)$-bounded nilpotency index  and of
finite $(n,d,m)$-bounded  codimension.

{\it Proof of Theorem~{\rm \ref{th1}}.\/}  First, we consider  the
case where  $G$ is a cyclic group of prime order $p$.  Let  $g$ be
a generator of $G$. Then $g$ induces an automorphism of the
algebra $A\otimes _{\Z }\, \Z [\w ]$, where $\w$ is a primitive
$p$-th root of unity. The fixed-point subalgebra  of this
automorphism denoted by the same letter has the same dimension $m$
over the field extended by~$\w$.    It suffices to prove
Theorem~\ref{th1} for the algebra $A\otimes _{\Z }\, \Z[\w ]$.
Hence in what follows we can assume  that the ground field $F$
contains~$\w$.  We define the homogeneous {\it compo\-nents\/}
$A_k$ for $k=0,\ldots, p-1$ as the subspaces
$$A_k=\left\{ a\in A\mid a^g=\w ^{k}a\right\} .$$
Since the characteristic of $F$ does not divide $p$, we have
$$A=A_0\oplus A_1\oplus \cdots \oplus A_{p-1}.$$ This
decomposition determines a grading on $A$ by a cyclic group of
prime order $p$, with $A_0=A^G$ in view of the obvious inclusions
$$A_sA_t\subseteq A_{s+t},$$ where $s+t$ is
computed modulo~$p$. Hence  the case $|G|=p$ in Theorem~\ref{th1}
follows from Theorem~\ref{th2}.

Let now $G$ be any finite soluble group of automorphisms of $A$,
and suppose that its order $n$ is not divisible by the
characteristic of $F$ . We use induction on $|G|$. We may assume
that $n$ is not a prime number. This means that in $G$ there is a
non-trivial normal subgroup $H$. We consider the subalgebra
$C=A^H$ of its fixed points. Since $H\vartriangleleft G$, we have
$C^g\leqslant C$  for any $g\in G$.  The subalgebra $C $ admits a
finite solvable group of automorphisms of order $\leq |G/H|$ which
is strictly less than $|G|$ and not divisible by the
characteristic of $F$. By induction $C$ has a nilpotent ideal
$J\lhd C$ of $(|G/H|,d, m)$-bounded codimension~$t=t(|G/H|,d, m)$
and of $(|G/H|,d)$-bounded nilpotency index $h=h(|G/H|,d)$. By
Lemma~\ref{l4-2} there exists a nilpotent $G$-invariant ideal
$K\geq J$ in $A$ of nilpotency index $h_1=h_1(|H|,h)$ bounded by
$|H|$ and by the nilpotency index of $J$. The subgroup $H$ acts on
the factor-algebra $\bar A=A/K$ and subalgebra of fixed points
$\bar A^{H}$ has dimension at most $t$. We  apply induction
hypothesis to the algebra $\bar A$ and the automorphism group $H$
of $\rm{Aut}\,\bar A$ whose order is strictly less than $|G|$. The
algebra $\bar A$ has a nilpotent ideal $Z$ of $(|H|,t)$-bounded
codimension and of $|H|$-bounded nilpotent index $h_2=h_2(|H|)$.
The image of $Z$ in $A$ is a required ideal since its nilpotency
index is at most $h_1h_2$, which is a $(n,d)$-bounded number, and
the codimension is $(n,d,m)$-bounded. \ep

\vskip2ex

\end{document}